\titleformat{\section}[hang]{\bf\Large}{\thesection.}{1ex}{}
\titleformat{\subsection}[hang]{\bfseries\normalsize}{\thesubsection}{1ex}{}
\def\distsign{\begin{picture}(0,0)\put(0,0){\circle{4}}\end{picture}}
\def\dist{\mbox{$\xymatrix@1@C=5mm{\ar@{->}[r]|{\distsign}&}$}}
\newcommand\adj[2]{\xymatrix@C=8ex{\ar@{}[r]|{\perp}\ar@/^2ex/[r]^{#1} & \ar@/^2ex/[l]^{#2}}}
\newcommand\jda[2]{\xymatrix@C=8ex{\ar@{}[r]|{\top}\ar@/^2ex/[r]^{#1} & \ar@/^2ex/[l]^{#2}}}
\newcommand\arr[1]{\xymatrix{\ar@{->}[r]^{#1}&}}
\newtheorem{theorem}{Theorem}[subsection]
\newtheorem{definition}[theorem]{Definition}
\newtheorem{proposition}[theorem]{Proposition}
\newtheorem{corollary}[theorem]{Corollary}
\newtheorem{example}[theorem]{Example}}
\newenvironment{proof}[1][Proof.]{\begin{trivlist}\item[\hskip\labelsep{\it #1}]}{\hfill$\Box$\end{trivlist}}
\newcommand\Sup{\mathsf{Sup}}
\newcommand\N{\mathbb{N}}
\renewcommand\:{\colon}
\newcommand\Cat{{\sf Cat}}
\newcommand\Dist{{\sf Dist}}
\newcommand\Q{Q}
\newcommand\bbA{\mathbb{A}}
\newcommand\bbB{\mathbb{B}}
\newcommand\bbC{\mathbb{C}}
\newcommand\bbD{\mathbb{D}}
\renewcommand\1{\mathds{1}}
\newcommand\lb{\hspace{0.3ex}[\hspace{-1ex}[\hspace{0.7ex}}
\newcommand\rb{\hspace{0.7ex}]\hspace{-1ex}]\hspace{0.3ex}}
\newcommand\LAdj{{\sf LAdj}}
\title{A logical analysis of fixpoint theorems}
\author{Arij Benkhadra\footnote{Laboratoire de Mathématiques Pures et Appliquées, Université du Littoral, Calais, France; Centre de Recherche de Mathématiques, Université Mohammed V, Rabat, Maroc; {\tt arij.benkahdra@univ-littoral.fr}}\quad and\quad Isar Stubbe\footnote{Laboratoire de Mathématiques Pures et Appliquées, Université du Littoral, Calais, France; {\tt isar.stubbe@univ-littoral.fr}}}
\date{October 27, 2022}
\begin{document}

\maketitle

\begin{abstract}
We prove a fixpoint theorem for contractions on Cauchy-complete quantale-enriched categories. It holds for any quantale whose underlying lattice is continuous, and applies to contractions whose control function is sequentially lower-semicontinuous. Sufficient conditions for the uniqueness of the fixpoint are established. Examples include known and new fixpoint theorems for metric spaces, fuzzy metric spaces, and probabilistic metric spaces.
\end{abstract}

\section*{Introduction}

A beautiful and important result in metric space theory, is Banach's fixpoint theorem \cite{banach} from 1922: ``Every contraction on a non-empty complete metric space admits a unique fixpoint.'' The gist of the proof is wonderfully simple: take any element $x$ of the space $(X,d)$ and, iterating the contraction $f\:X\to X$, prove that the sequence $(f^nx)_{n\in\N}$ is Cauchy. In the complete space $(X,d)$ this sequence converges, and one then shows that it does so to a (necessarily unique) fixpoint of $f$. Many generalizations and applications of Banach's theorem have been, and are still, studied. 

In 1972, Lawvere \cite{Lawvere} famously showed that metric spaces are a particular instance of enriched categories. More impressively still, Lawvere also showed how convergence of Cauchy sequences can adequately be understood via representability of left adjoint distributors, thus lifting the very concept of Cauchy completeness to the level of enriched categories. In his words, ``specializing the constructions and theorems of general category theory we can deduce a large part of general metric space theory.'' 

It is thus natural to investigate whether fixpoint theorems still make sense in the vast context of enriched categories. This is precisely the subject of this paper. 

More precisely, we shall take quantale-enriched categories as generalization of metric spaces. That is to say, we fix a quantale $Q$, and work with categories, functors and distributors enriched in $Q$. Our contribution shows that fixpoint theorems for $Q$-categories depend on the interplay between three essential parameters. Indeed, a given contraction must be ``strong enough'' (we shall measure its strength by means of a control function); the space on which it acts must be ``complete enough'' for the Picard iteration to converge to a fixpoint (we shall take this to be Cauchy-completeness in the sense of Lawvere); but we also need sufficiently strong algebraic properties of the underlying quantale $Q$ to allow for the formulation of precisely that convergence.

In concreto, we shall prove a fixpoint theorem for Cauchy-complete $Q$-categories\footnote{To stay faithful to Banach's theorem in the metric case, we have chosen to study fixpoints for contractions on \emph{Cauchy-complete} $Q$-categories. Let us mention, though, that other authors have studied other kinds of completeness, e.g.\ Wagner \cite{wagner97} chooses liminf-complete $Q$-categories, whereas Ackerman \cite{Ackerman16} works with spherically complete $Q$-categories (and both use a commutative quantale $Q$).} that holds for any quantale $Q$ whose underlying complete lattice is continuous and for a specific notion of contraction. Besides, we make plain when and why such a fixpoint is unique (up to isomorphism). As examples we find the classical Banach fixpoint theorem for metric spaces, and Boyd and Wong's \cite{boywan69} generalization thereof (taking the underlying quantale to be the positive real numbers); but we also formulate new results for fuzzy ordered sets (when working over a left-continuous $t$-norm) and for probabilistic metric spaces (now the quantale is the tensor product of the positive reals with a left-continuous $t$-norm).

In Section \ref{cats} we shall provide all the necessary notions from quantale-enriched category theory to make this paper reasonably self-contained; we follow \cite{stu05} for the general theory, and \cite{hofrei13} specifically for the comparison between categorical and sequential Cauchy-completeness. In Section \ref{fix} we first introduce the contractions that we are interested in, then we show how these contractions determine Cauchy distributors under the appropriate algebraic condition on the quantale $Q$, and finally we formulate the resulting fixpoint theorem for Cauchy-complete $Q$-categories. The examples in Section \ref{examples} show how our fixpoint theorem generalizes known results from the literature, and provides for new results too. We end with a short conclusion in Section \ref{concl}.

\section{Quantale-enriched categories}\label{cats}

\subsection{\texorpdfstring{$Q$}{Q}-enriched categories, functors and distributors}

In this section we recall some key notions from \cite{stu05} on quantale-enriched categories\footnote{That reference actually treats the more general \emph{quantaloid}-enriched category theory, but the reader will easily convert those results to the simpler quantale-enriched case. See also \cite{stu13} for a gentle introduction to the subject.}; we encourage the reader to go back-and-forth to Subsection \ref{examples-Q-cat} for the relevant examples. 

Throughout, we fix a quantale $\Q=(\Q,\bigvee,\circ,1)$: it is a complete sup-lattice $(\Q,\bigvee)$ endowed with a monoid\footnote{We do \emph{not} assume that $1$, the unit of the monoid, is the top element of the lattice.} structure $(\Q,\circ,1)$ such that the product distributes over arbitrary suprema:
$$s\circ(\bigvee_i t_i)=\bigvee_i(s\circ t_i)\quad\mbox{ and }\quad(\bigvee_is_i)\circ t=\bigvee_i(s_i\circ t).$$
In other words, but more abstractly, a quantale is a monoid in the symmetric monoidal closed category $\Sup$ of complete lattices and supremum-preserving morphisms.

A \emph{$\Q$-enriched category $\mathbb{C}$} (or \emph{$\Q$-category $\bbC$} for short) consists of a set $\mathbb{C}_0$ (of ``objects'') together with a $Q$-valued (``hom'') precidate 
$$\bbC\:\bbC_0\times\bbC_0\to\Q\:(x,y)\mapsto\bbC(x,y)$$
satisfying, for all $x,y,z\in\bbC_0$, the following (``composition'' and ``identity'') conditions:
$$\mathbb{C}(x,y)\circ\mathbb{C}(y,z)\leq\mathbb{C}(x,z)\quad\mbox{ and }\quad 1\leq\mathbb{C}(x,x).$$

A \emph{$\Q$-functor} $F\:\bbC\to\bbD$ between two $\Q$-categories is a function $F\:\bbC_0\to\bbD_0\:x\mapsto Fx$
satisfying, for all $x,x'\in\bbC_0$, the (``functoriality'') condition
$$\bbC(x',x)\leq\bbD(Fx',Fx).$$
Two such $\Q$-functors $F\:\bbA\to\bbB$ and $G\:\bbB\to\bbC$ can be composed in the obvious way to produce a new functor $G\circ F\:\bbA\to\bbC$, and the identity function on $\bbA_0$ provides for the identity functor $1_{\bbA}\:\bbA\to\bbA$. Thus $\Q$-categories and $\Q$-functors are the objects and morphisms of a (large) category $\Cat(\Q)$. 

A \emph{$\Q$-distributor} (also called \emph{bimodule} or \emph{profunctor}) $\Phi\:\bbC\dist\bbD$ between two $\Q$-categories is a $\Q$-valued predicate
$$\Phi\:\bbD_0\times\bbC_0\to\Q\:(y,x)\mapsto\Phi(y,x)$$
satisfying, for all $x,x'\in\bbC_0$ and $y,y'\in\bbC_0$, the (``action'') condition
$$\bbD(y',y)\circ\Phi(y,x)\circ\bbC(x,x')\leq\Phi(y',x').$$
Two such distributors, say $\Phi\:\bbA\dist\bbB$ and $\Psi\:\bbB\dist\bbC$, compose as
$$(\Psi\circ\Phi)\:\bbC_0\times\bbA_0\mapsto\Q\:(z,x)\mapsto\bigvee_{y\in\bbB_0}\Psi(z,y)\circ\Phi(y,x).$$
The identity distributor on $\bbC$ is the ``hom'' predicate $\bbC\:\bbC_0\times\bbC_0\to\Q$ itself, and so $\Q$-categories and $\Q$-distributors form a (large) category $\Dist(\Q)$. However, there is more: the elementwise ordering of distributors makes $\Dist(Q)$ a 2-category\footnote{Much better still: $\Dist(\Q)$ is a \emph{quantaloid}, i.e.\ a category enriched in $\Sup$. Since we do not need this very rich structure in this paper, we shall not dwell on it here.}. 

Applying general 2-categorical algebra, we may now say that two $\Q$-distributors $\Phi\:\bbA\dist\bbB$ and $\Psi\:\bbB\dist\bbA$ are \emph{(left/right) adjoint}, denoted $\Phi\dashv\Psi$, if
$$\bbA\leq\Psi\circ\Phi\quad\mbox{ and }\quad\Phi\circ\Psi\leq\bbB.$$
Every functor $F\:\bbA\to\bbB$ \emph{represents} an adjoint pair of distributors $F_*\dashv F^*$ defined by
$$F_*(b,a)=\bbB(b,Fa)\quad\mbox{ and }\quad F^*(a,b)=\bbB(Fa,b).$$ 
With this, the inclusion functor
$$\Cat(\Q)\to\Dist(\Q)\:\Big(F\:\bbA\to\bbB\Big)\mapsto\Big(F_*\:\bbA\dist\bbB\Big)$$
naturally makes $\Cat(\Q)$ a \emph{locally ordered category} by defining, for $F,G\in\Cat(Q)$,
$$F\leq G\stackrel{\rm def}{\iff}F_*\leq G_*.$$
Whenever $F\leq G$ and $G\leq F$, we write $F\cong G$ and say that these functors are \emph{isomorphic}.

For a fixed $Q$-category $\bbC$, we may consider, for any other $Q$-category $\bbA$, the map which assigns to any functor $F\:\bbA\to\bbC$ the left adjoint distributor $F_*\:\bbA\dist\bbC$:
$$\Cat(Q)(\bbA,\bbC)\to\LAdj\Dist(Q)(\bbA,\bbC)\:F\mapsto F_*.$$
This map is (by definition of the local order in $\Cat(Q)$) order-preserving and order-reflecting. If, for each $\bbA$, this maps is also surjective (in words: every left adjoint distributor into $\bbC$ is representable by a functor), then we say that $\bbC$ is \emph{Cauchy-complete}. 

Let $\1$ be the $\Q$-category defined by $\1_0=\{*\}$ and $\1(*,*)=1$. A distributor $\phi\:\1\dist\bbC$ is called a (contravariant) \emph{presheaf} on $\bbC$. There is a natural bijection between $\Q$-functors $\1\to\bbC$ and elements of $\bbC_0$. In particular, for any $c\in\bbC_0$ there is a $\Q$-functor $\Delta_c\:\1\to\bbC\:*\mapsto c$ which represents the left adjoint presheaf $(\Delta_c)_*\:\bbC_0\times\1_0\to\Q\:(x,*)\mapsto\bbC(x,c)$. Therefore, by putting
$$c\leq c'\stackrel{\rm def}{\iff}\Delta_c\leq\Delta_{c'}\iff\forall x\in\bbC_0:\bbC(-,c)\leq\bbC(-,c')\iff 1\leq\bbC(c,c')$$
the set $\bbC_0$ becomes an order $(\bbC_0,\leq)$. If both $c\leq c'$ and $c'\leq c$ hold, then we write $c\cong c'$ and we say that these objects of $\bbC$ are \emph{isomorphic}. It is furthermore a result in $\Q$-category theory (which holds in greater generality too) that $\bbC$ is Cauchy-complete if and only if 
$$\Cat(Q)(\1,\bbC)\to\LAdj\Dist(Q)(\1,\bbC)$$
is surjective; in words, $\bbC$ is Cauchy-complete if and only if each left adjoint presheaf on $\bbC$ is representable. 

The importance of Cauchy-complete $\Q$-categories was made very clear in Lawvere's seminal paper \cite{Lawvere} on the subject, via its relation to Cauchy sequences. We shall briefly recall a small portion of this, using Hofmann and Reis \cite[Section 4.3]{hofrei13} as reference.

Given a sequence $x=(x_n)_{n\in\N}$ in a $\Q$-category $\bbC$, we define
$$C_x:=\bigvee\limits_{N\in\N}\bigwedge\limits_{n\geq N}\bigwedge\limits_{m\geq N}\bbC(x_n,x_m).$$
and say that $x=(x_n)_{n\in\N}$ is a \emph{Cauchy sequence} if $C_x\geq1$.
On the other had, we also define 
$$\phi_x\:\bbC_0\to Q\:y\mapsto\bigvee\limits_{N\in\N}\bigwedge\limits_{n\geq N}\bbC(y,x_n)\quad \textrm{and}\quad \psi_x\:\bbC_0\to Q\:y\mapsto\bigvee\limits_{N\in\N}\bigwedge\limits_{n\geq N}\bbC(x_n,y)$$
and then have for these $Q$-valued predicates that:
\begin{proposition}\label{cauchy-sequences}
For any sequence $x=(x_n)_{n\in\N}$ of objects in a $Q$-category $\bbC$, both $\phi_x$ and $\psi_x$ are $\Q$-enriched distributors. Furthermore, the sequence $x=(x_n)_{n\in\N}$ is Cauchy (i.e.\ $C_x\geq 1$) if and only if $\phi_x\dashv\psi_x$.
\end{proposition}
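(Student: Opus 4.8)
The plan is to verify everything by elementwise computation in the quantale $\Q$, relying on two facts throughout: the composition and identity axioms of $\bbC$, and the fact that $-\circ-$ preserves arbitrary suprema in each variable (hence is monotone, and only \emph{sub}commutes with infima, i.e.\ $s\circ\bigwedge_it_i\le\bigwedge_i(s\circ t_i)$). First I would check that $\phi_x$ and $\psi_x$ are distributors. For $\phi_x\:\1\dist\bbC$ the only action inequality to prove is $\bbC(y',y)\circ\phi_x(y)\le\phi_x(y')$: expanding $\phi_x(y)=\bigvee_N\bigwedge_{n\ge N}\bbC(y,x_n)$, pulling $\bbC(y',y)\circ-$ through the supremum, sub-commuting it past each infimum, and applying $\bbC(y',y)\circ\bbC(y,x_n)\le\bbC(y',x_n)$ termwise gives the claim. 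The argument for $\psi_x$ is the mirror image, using instead $\bbC(x_n,y)\circ\bbC(y,y')\le\bbC(x_n,y')$.

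Next I would unwind the adjunction $\phi_x\dashv\psi_x$. Since $\1$ has a single object, the two defining inequalities become: (i) $1\le\bigvee_{y\in\bbC_0}\psi_x(y)\circ\phi_x(y)$, coming from $\1\le\psi_x\circ\phi_x$, and (ii) $\phi_x(y')\circ\psi_x(y)\le\bbC(y',y)$ for all $y',y\in\bbC_0$, coming from $\phi_x\circ\psi_x\le\bbC$. I claim (ii) holds unconditionally: expanding both presheaves, distributing $\circ$ over the two suprema, and for each pair of indices $N,M$ choosing any $k\ge\max(N,M)$, monotonicity gives $\bigl(\bigwedge_{n\ge N}\bbC(y',x_n)\bigr)\circ\bigl(\bigwedge_{m\ge M}\bbC(x_m,y)\bigr)\le\bbC(y',x_k)\circ\bbC(x_k,y)\le\bbC(y',y)$, so the whole supremum is bounded by $\bbC(y',y)$. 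Thus the adjunction reduces entirely to condition (i).

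Finally I would show that (i) is equivalent to $C_x\ge1$ by sandwiching $S:=\bigvee_{y}\psi_x(y)\circ\phi_x(y)$ between $C_x\circ C_x$ and $C_x$. Writing $c_N:=\bigwedge_{n\ge N}\bigwedge_{m\ge N}\bbC(x_n,x_m)$, so that $(c_N)_N$ is increasing with $\bigvee_Nc_N=C_x$: on the one hand, testing $S$ at $y=x_k$ for any $k\ge N$ and using $\phi_x(x_k)\ge c_N$ and $\psi_x(x_k)\ge c_N$ yields $S\ge\bigvee_N c_N\circ c_N=(\bigvee_Nc_N)\circ(\bigvee_Nc_N)=C_x\circ C_x$, where the diagonalization is valid because $(c_N)_N$ is increasing; on the other hand, the same index bookkeeping as in (ii) shows each summand satisfies $\psi_x(y)\circ\phi_x(y)\le C_x$, whence $S\le C_x$. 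If $x$ is Cauchy then $C_x\ge1$ forces $S\ge C_x\circ C_x\ge1\circ1=1$, giving (i); conversely (i) gives $1\le S\le C_x$, so $x$ is Cauchy.

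I expect the one genuinely delicate point to be the pair of inequalities $C_x\circ C_x\le S\le C_x$: both hinge on keeping the variances of $\phi_x$ and $\psi_x$ straight and on exploiting that $\circ$ commutes with suprema but only sub-commutes with infima, while the lower bound additionally needs the diagonalization $\bigvee_N c_N\circ c_N=(\bigvee_Nc_N)\circ(\bigvee_Nc_N)$ that is licensed by monotonicity of $(c_N)_N$. Everything else is routine symbol-pushing.
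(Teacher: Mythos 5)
Your proof is correct. The paper itself gives no proof of Proposition \ref{cauchy-sequences} --- it is recalled from Hofmann and Reis \cite[Section 4.3]{hofrei13} --- so there is no in-paper argument to compare against; your elementwise verification (the distributor axioms via sub-commutation of $\circ$ with infima, the counit inequality $\phi_x\circ\psi_x\le\bbC$ holding unconditionally by the $k\ge\max(N,M)$ bookkeeping, and the unit inequality handled by sandwiching $C_x\circ C_x\le\bigvee_{y}\psi_x(y)\circ\phi_x(y)\le C_x$ with the diagonalization justified by monotonicity of $(c_N)_N$) is the standard argument and is complete, including in the noncommutative case since you only ever use monotonicity of $\circ$ in each variable and the composition axiom in the correct order.
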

Thus it makes perfect sense to speak of (convergence of) Cauchy sequences in any $Q$-category $\bbC$, via the representability of the associated adjoint pair of distributors, which is exactly what we shall need to do in the proof of Proposition \ref{contr-adj} further on.

\subsection{Examples of \texorpdfstring{$Q$}{Q}-enriched categories}\label{examples-Q-cat}

In the rest of the paper, our examples of $\Q$-enriched categories will be:
\begin{example}[Ordered sets]\label{order}
The simplest (non-trivial) example of a quantale is the two-element Boolean algebra $\Q=(\{0,1\},\vee,\wedge,1)$. In this case, a $\Q$-category is precisely an ordered set $(P,\leq)$, i.e.\ a set $P$ equipped with a binary relation $\leq$ whose characteristic function $P\times P\to\{0,1\}\:(x,y)\mapsto\lb x\leq y\rb$ satisfies the following axioms:
\begin{enumerate}[label=(\arabic*)]
\item $\lb x\leq y\rb\ \wedge\ \lb y\leq z\rb\ \leq\ \lb x\leq z\rb$\ ,
\item $1\leq\ \lb x\leq x\rb$.
\end{enumerate}
(This order-relation need not be anti-symmetric; some call this a ``preorder''.) A $\Q$-functor between such $\Q$-categories is a monotone map between ordered sets. It is well-known (and easy to  verify) that every ordered set is, viewed as an enriched category, Cauchy-complete.
\end{example}
\begin{example}[Metric spaces]\label{metric}
Let $\Q=([0,\infty],\bigwedge,+,0)$ be Lawvere's quantale of extended positive real numbers, i.e.\ it is the segment $[0,\infty]$ (with $\infty$ included) with the converse (!) of the natural (linear) order, and with the sum as binary operation. 
As pointed out by Lawvere \cite{Lawvere}, a $\Q$-category is precisely a \emph{generalised metric space} $(X,d)$, that is, a set $X$ together with a distance function $d\:X\times X\to[0,\infty]$
such that
\begin{enumerate}[label=(\arabic*)]
\item $d(x,y)+d(y,z)\geq d(x,y)$,
\item $0\geq d(x,x)$.
\end{enumerate}
The adjective ``generalized'' here indicates that such a metric need not be finitary (so $d(x,y)=\infty$ is allowed) nor symmetric (so $d(x,y)\neq d(y,x)$ is allowed), nor separated (so $d(x,y)=0=d(y,x)$ for $x\neq y$ is allowed). A $\Q$-functor between such $\Q$-categories is a non-expanding map between (generalized) metric spaces. Lawvere \cite{Lawvere} famously showed that a metric space is Cauchy-complete as enriched category if and only if all Cauchy sequences (in the usual sense for metric spaces) converge.
\end{example}
\begin{example}[Fuzzy orders]\label{fuzzyorder}
A so-called \emph{left-continuous $t$-norm} is precisely a commutative and integral quantale whose underlying (linear) suplattice is $([0,1],\bigvee)$ (see e.g.\ \cite{Klement13,stu13}); the multiplication of such a quantale is then typically written as $x * y$. Examples include $x*y=xy$ (the ``product $t$-norm''), $x*y=\min\{x,y\}$ (the ``minimum $t$-norm'') and $x*y=\max\{x+y-1,0\}$ (the ``Lukasiewicz $t$-norm''); in fact, every \emph{continuous} $t$-norm (meaning that the multiplication is a continuous function) is in a precise sense an amalgamation of these three (see e.g.\ \cite{hadzicpap}). These quantales are the corner stone of ``fuzzy'' logic: the truth values in this logic can vary between $0$ and $1$, conjunction is computed with $*$, and implication is computed with the adjoint to multiplication. A category enriched in a left-continuous $t$-norm $([0,1],\bigvee,*,1)$ thus consists of a set $P$ together with a map $P\times P\to[0,1]\:(x,y)\mapsto \lb x\leq y\rb$ satisfying
\begin{enumerate}[label=(\arabic*)]
\item $\lb x\leq y\rb\ *\ \lb y\leq z\rb \ \leq\ \lb x\leq z\rb$\ ,
\item $1\leq\ \lb x\leq x\rb$.
\end{enumerate}
Following \cite{Zadeh71,ovchinnikov,Coppola,laizhang}, we call this a \emph{fuzzy (pre)order}: the truth value $\lb x\leq y\rb\in[0,1]$ is interpreted as ``the extent to which $x\leq y$ holds in $P$''. A $\Q$-functor between such $\Q$-categories is a map between fuzzy preorders that does not decrease the value of the ``fuzzy'' order. By Theorem 4.19 of \cite{hofrei13} (and the definition of Cauchy sequence in a $\Q$-category recalled above) it follows that a fuzzy order is categorically Cauchy-complete if and only if all Cauchy sequences (in the usual sense for fuzzy orders, see Definition 4.1 in \cite{Coppola}) converge.
\end{example}
\begin{example}[Probabilistic metric space]\label{probametric}
Fix a \emph{left-continuous} $t$-norm $([0,1],\bigvee,*,1)$. It was shown by Hofmann and Reis \cite{hofrei13}, and further explained in \cite{EklGutHohKort}, that the set
$$\Delta=\{f:[0,\infty]\to[0,1]\mid f(t)=\bigvee_{s<t}f(s)\}$$
of so-called distance distributions\footnote{Because domain and codomain are continuous lattices, these are precisely the lower semicontinuous functions, see \cite[Proposition II-2.1]{Gierz}; and because domain and codomain are complete linear orders, these are precisely the supremum-preserving maps, see \cite[Example 2.1.10]{EklGutHohKort}.} is a
quantale for pointwise suprema, with the convolution product
$$(f*g)(t)=\bigvee_{r+s=t}f(r)*g(s)$$
as binary operation, and
$$e(t)=\left\{\begin{array}{l}
0\mbox{ if }t=0,\\
1\mbox{ else }
\end{array}\right.$$
as two-sided unit. Indeed, it is shown in \cite[Examples 2.1.10 and 2.3.36]{GutGarHohKub17} that the quantale $\Q=(\Delta,\bigvee,*,e)$ is the tensor product in the category of suplattices, as well as the coproduct in the category of commutative quantales, of the Lawvere quantale $([0,\infty],\bigwedge,+,0)$ and the left-continuous $t$-norm $([0,1],\bigvee,*,1)$.
A $\Q$-category has been called a \emph{(generalized) probabilistic metric space} by some \cite{hofrei13,HeLiaShen19}, and a \emph{(generalized) fuzzy metric space} by others \cite{KraMic75,GeoVee94}; it consists of a set $X$ together with a probabilistic distance function $d\:X\times X\times[0,\infty]\to[0,1]$
such that
\begin{enumerate}[label=(\arabic*),start=0]
\item $d(x,y,t)=\bigvee_{s<t}d(x,y,s)$,
\item $d(x,x,t)=1$ for $t>0$,
\item $d(x,y,r)*d(y,z,s)\leq d(x,z,r+s)$.
\end{enumerate}
Such an object is often denoted $(X,d,*)$, to stress the importance of the $t$-norm. The intended meaning of $d(x,y,t)$ is that it expresses ``the probability that the distance from $x$ to $y$ is strictly less than $t$''. (Again, we do not insist on finiteness, symmetry or separatedness for such a space, each of which can be expressed suitably; see also \cite{schweizersklar}.) A $\Q$-enriched functor is a map between such spaces that does not decrease such probabilistic distances. Hofmann and Reiss \cite{hofrei13} proved that a probabilistic metric space is categorically Cauchy-complete if and only if all Cauchy sequences (as traditionally defined in probabilistic metric spaces, see \cite{chai09,hofrei13}) converge.
\end{example}

\section{Fixpoints for contractions on \texorpdfstring{$Q$}{Q}-categories}\label{fix}

\subsection{Contractions on a \texorpdfstring{$Q$}{Q}-enriched category}

Let $\Q$ be any quantale (and write $0$ for its bottom element), and $\bbC$ any $\Q$-enriched category.
\begin{definition}\label{def-contr}
If $\varphi:\Q\to\Q$ and $f:\bbC_0\to\bbC_0$ are maps such that 
\begin{enumerate}
\item $\varphi(t)\geq t$ for all $t\in Q$, 
\item if $\varphi(t)=t$ then $t=0$ or $1\leq t$,
\item for all $x,y\in\bbC$, $\bbC(fx,fy)\geq\varphi(\bbC(x,y))$,
\end{enumerate}
then we say that $f$ is a $\varphi$-contraction, and we say that $\varphi$ is a control function for $f$.
\end{definition}
A control function $\varphi$ is thus bigger than the identity function on the whole of $Q$, and strictly so except possibly in $t=0$ or $t\geq 1$. Note too that a $\varphi$-contraction $f$ is always a $Q$-functor $f\:\bbC\to\bbC$, but not every $Q$-functor is $\varphi$-contractive for some control function $\varphi$. 

We wish to investigate the possible fixpoints of such contractions. Let us first make this formal:
\begin{definition}
Let $f\:\bbC\to\bbC$ be a $Q$-functor. A fixpoint for $f$ is an $u\in\bbC$ such that $fu\cong u$ in $\bbC$, that is to say, we have both $1\leq\bbC(fu,u)$ and $1\leq\bbC(u,fu)$.
\end{definition}
In general, such fixpoints are of course not unique. However, if $f$ is a $\varphi$-contraction, and both $fu\cong u$ and $fu'\cong u'$ hold, then it follows from the triangular inequality in $\bbC$ that
\begin{align*}
\bbC(u,u')
&\geq\bbC(u,fu)\circ\bbC(fu,fu')\circ\bbC(fu',u')\\
& \geq 1\circ\bbC(fu,fu')\circ 1 \\
&=\bbC(fu,fu')\\
&\geq\varphi(\bbC(u,u'))\\
&\geq\bbC(u,u')
\end{align*}
Since $\varphi(t)>t$ for all $0\neq t\not\geq1$, we must have $\bbC(u,u')=0$ or $\bbC(u,u')\geq 1$. Exchanging $u$ and $u'$ one sees that also $\bbC(u',u)=0$ or $\bbC(u',u)\geq 1$. Hence there are exactly four possibilities:
$$\left\{\begin{array}{l}
\bbC(u,u')\geq 1\\
\bbC(u',u)\geq 1
\end{array}
\right.
\mbox{ or }
\left\{\begin{array}{l}
\bbC(u,u')\geq 1\\
\bbC(u',u)=0
\end{array}
\right.
\mbox{ or }
\left\{\begin{array}{l}
\bbC(u,u')=0\\
\bbC(u',u)\geq 1
\end{array}
\right.
\mbox{ or }
\left\{\begin{array}{l}
\bbC(u,u')=0\\
\bbC(u',u)=0
\end{array}
\right.$$
Under mild assumptions on $\bbC$ we can now formulate uniqueness results for fixpoints.
\begin{proposition}\label{Uniqueness-Fix}
Let $\bbC$ be a $\Q$-category all of whose homs are non-zero, and $f\:\bbC\to\bbC$ any $\varphi$-contraction. If $fu\cong u$ and $fu'\cong u'$ then $u\cong u'$.
\end{proposition}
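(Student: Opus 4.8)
The plan is to invoke directly the four-way case analysis carried out in the paragraph immediately preceding the statement, and then use the new hypothesis to discard all but one case. Recall that for any two fixpoints $u$ and $u'$ of the $\varphi$-contraction $f$, the triangular inequality in $\bbC$ together with the defining properties of the control function $\varphi$ forced
$$\bbC(u,u')=0\ \mbox{ or }\ \bbC(u,u')\geq 1,$$
and, by the symmetric argument, the same dichotomy for $\bbC(u',u)$. This left exactly the four combinations displayed there.

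The single additional ingredient I would use is the standing hypothesis that every hom-object of $\bbC$ is non-zero. In particular $\bbC(u,u')\neq 0$ and $\bbC(u',u)\neq 0$. Feeding this into the dichotomy above immediately rules out the three cases in which one of the two homs vanishes, leaving only the first possibility, namely $\bbC(u,u')\geq 1$ and $\bbC(u',u)\geq 1$.

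Finally I would observe that this surviving case is, by the very definition of isomorphism of objects recalled in Section \ref{cats}, precisely the assertion that $u\cong u'$: we have both $1\leq\bbC(u,u')$ and $1\leq\bbC(u',u)$, which is what it means for $u$ and $u'$ to be isomorphic in $\bbC$. This completes the argument.

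There is no genuine obstacle here, since all the substantive estimation was already performed before the statement. The only point meriting care is making sure the non-zero hypothesis is applied to \emph{both} directions, $\bbC(u,u')$ and $\bbC(u',u)$; overlooking one of them would leave open one of the asymmetric cases and fail to conclude $u\cong u'$. Everything else is a direct reading-off of the previously established dichotomy.
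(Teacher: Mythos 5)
Your proposal is correct and is essentially identical to the paper's own proof, which likewise notes that of the four cases established in the preceding paragraph only the first is compatible with all homs being non-zero. The additional care you take to spell out that the non-zero hypothesis must be applied to both $\bbC(u,u')$ and $\bbC(u',u)$ is a reasonable elaboration but not a different argument.
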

\begin{proof}
In the four possible cases above, only the first is compatible with non-zero homs in $\bbC$.
\end{proof}
For $\Q$-categories with homs that can be equal to 0, we have a different result.
\begin{proposition}\label{Generalized-Uniqueness-Fix}
Let $\bbC$ be symmetric $\Q$-category (meaning that $\bbC(x,y)=\bbC(y,x)$ for all $x,y\in\bbC$) and $f\:\bbC\to\bbC$ any $\varphi$-contraction. If $fu\cong u$ and $fu'\cong u'$ then either $u\cong u'$ or $\bbC(u,u')=0$.
\end{proposition}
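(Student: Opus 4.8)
The plan is to specialize the four-case dichotomy already established in the paragraph preceding Proposition \ref{Uniqueness-Fix}. Given a $\varphi$-contraction $f$ with $fu\cong u$ and $fu'\cong u'$, that computation shows (via the triangular inequality in $\bbC$ together with properties (1)--(3) of the control function) that each of $\bbC(u,u')$ and $\bbC(u',u)$ must be either $0$ or $\geq 1$, leaving exactly the four displayed possibilities. So it suffices to decide which of these four survive the extra hypothesis that $\bbC$ is symmetric.

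The key step is to impose $\bbC(u,u')=\bbC(u',u)$. This single equation immediately eliminates the two ``mixed'' cases, namely $\bbC(u,u')\geq 1$ together with $\bbC(u',u)=0$ and its mirror image: each would force $0=\bbC(u',u)=\bbC(u,u')\geq 1$, hence $0\geq 1$. Since $0$ is the bottom element we always have $0\leq 1$, so $0\geq 1$ can hold only in the trivial one-element quantale (where $0=1$), in which every pair of objects is already isomorphic and the conclusion holds anyway. In any nontrivial quantale the two mixed cases are simply impossible.

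What survives is the first case, $\bbC(u,u')\geq 1$ and $\bbC(u',u)\geq 1$, which is by definition exactly $u\cong u'$; and the fourth case, $\bbC(u,u')=0=\bbC(u',u)$, which under symmetry is just the single condition $\bbC(u,u')=0$. This is precisely the claimed dichotomy. I do not expect any serious obstacle here, as the whole argument is a direct reading of the preceding four-case analysis through the symmetry equation; the only point deserving a second glance is the degenerate situation $0=1$, which I would dispatch by the remark above rather than let it clutter the main line of reasoning.
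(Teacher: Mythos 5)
Your proof is correct and follows essentially the same route as the paper's: both read the four-case dichotomy established before Proposition \ref{Uniqueness-Fix} through the symmetry equation $\bbC(u,u')=\bbC(u',u)$, which kills the two mixed cases and leaves exactly the claimed alternative. Your extra remark about the degenerate quantale with $0=1$ is a harmless (and slightly more careful) addition that the paper omits.
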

\begin{proof}
In the four possible cases above, only the first and the last are compatible with symmetry in $\bbC$.
\end{proof}
Reckoning that any symmetric $Q$-category decomposes as a categorical sum of symmetric subcategories, each of which has all homs non-zero, the latter Proposition says that any two distict fixpoints of $f\:\bbC\to\bbC$ must be in different summands of $\bbC$.

\subsection{From contractions to adjoint presheaves}

Given any $\varphi$-contraction $f$ on a $\Q$-category $\bbC$ and an object $x\in\bbC_0$, it follows from Proposition \ref{cauchy-sequences} that the sequence $(f^nx)_{n\in\N}$ determines two distributors, 
$$\phi_{x,f}\:\1\dist\bbC\quad\mbox{ and  }\quad\psi_{x,f}\:\bbC\dist\1,$$ 
with elements
$$\phi_{x,f}(y)=\bigvee_{N\in\N}\bigwedge_{n\geq N}\bbC(y,f^nx)\quad\mbox{ and  }\quad\psi_{x,f}(y)=\bigvee_{N\in\N}\bigwedge_{n\geq N}\bbC(f^nx,y).$$
We now wish to identify sufficient conditions on $\Q$ and $\varphi\:Q\to Q$ in order to prove an adjunction between these distributors. 

To that end, we first recall some pertinent definitions from \cite{Gierz}. Let $L$ be a complete lattice. A subset $D\subseteq L$ is directed if it is non-empty and, for any $x,y\in D$ there exists a $z\in D$ such that $x\vee y\subseteq z$. For two elements $a,b\in L$ we write $a\ll b$, and we say that $a$ is \emph{way below} $b$, if, for every directed subset $D\subseteq L$, $b\leq\bigvee D$ implies the existence of a $d\in D$ such that $a\leq d$. 
\begin{definition}
We say that a complete lattice $L$ is continuous if, for each $a\in L$,
$$a=\bigvee\{u\in L\mid u\ll a\}.$$
\end{definition}
It is well-known that every continuous lattice is meet-continuous---meaning that (binary) meets distribute over (all) directed suprema. Finally, we shall be interested in a weak variant\footnote{A function $f\:L\to M$ between complete lattices is lower-semicontinuous if the sup-inf condition in Definition \ref{def-phi} holds for all \emph{nets} in $L$ (i.e.\ a family of elements indexed by a directed poset).} of \emph{lower-semicontinuity}:
\begin{definition}\label{def-phi}
We say that a function $\varphi:L\to M$ between complete lattices is sequentially lower-semicontinuous if, for any sequence $(t_n)_{n\in\N}$ in $L$,
\begin{equation*}
\varphi(\bigvee_{N\in\N}\bigwedge_{n\geq N}t_n)\leq\bigvee_{N\in\N}\bigwedge_{n\geq N}\varphi(t_n).
\end{equation*}
\end{definition}
Taking inspiration from the ``metric'' case discussed in \cite{boywan69}, we can now prove:
\begin{proposition}\label{contr-adj}
Let $\Q$ be a quantale whose underlying complete lattice is continuous\footnote{It is tempting to speak of a \emph{continuous quantale}, yet this terminology is in conflict with that of \emph{continuous $t$-norm}. Indeed, the underlying lattice of any $t$-norm is the continuous lattice $[0,1]$, yet not every $t$-norm is continuous (as a function in two variables). So we shall stick to the somewhat cumbersome ``quantale with underlying continuous lattice''.}, and let $f:\bbC\to\bbC$ be a $\varphi$-contraction on a $\Q$-category for which the control function $\varphi\:Q\to Q$ is sequentially lower-semicontinuous. For any $x\in\bbC_0$ such that $\bbC(x,fx)\neq0\neq\bbC(fx,x)$ we have that $\phi_{x,f}\dashv\psi_{x,f}$.
\end{proposition}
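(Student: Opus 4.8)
The plan is to reduce the claimed adjunction to a Cauchy condition and then prove that condition by a way-below approximation that mimics, at the level of an arbitrary quantale, Boyd and Wong's argument for metric spaces. Write $x_n:=f^nx$ and $g_{n,m}:=\bbC(x_n,x_m)$. By Proposition \ref{cauchy-sequences}, $\phi_{x,f}\dashv\psi_{x,f}$ holds precisely when $(x_n)_n$ is Cauchy, i.e.\ when
$$G:=\bigvee_{N}\bigwedge_{n\geq N}\bigwedge_{m\geq N}g_{n,m}\geq 1,$$
so I aim to prove $G\geq 1$. First I would record that $\varphi$ is monotone: testing the inequality of Definition \ref{def-phi} on a sequence alternating between two comparable values $s\leq t$ yields $\varphi(s)\leq\varphi(t)$. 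Next, from condition (3) of Definition \ref{def-contr} the consecutive homs $a_n:=g_{n,n+1}$ satisfy $a_{n+1}\geq\varphi(a_n)\geq a_n$, so $(a_n)_n$ is an increasing chain with supremum $a:=\bigvee_na_n$. Applying sequential lower-semicontinuity to this increasing sequence (for which the liminf equals the supremum) gives $\varphi(a)\leq a$, whence $\varphi(a)=a$ by condition (1); condition (2) then forces $a=0$ or $a\geq 1$, and since $a\geq a_0=\bbC(x,fx)\neq 0$ we conclude $a\geq 1$. The symmetric computation with $b_n:=g_{n+1,n}$ and the hypothesis $\bbC(fx,x)\neq 0$ yields $b:=\bigvee_nb_n\geq 1$.

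For $G$ itself I would argue by contradiction, using continuity of the lattice. Since $1=\bigvee\{u\mid u\ll 1\}$, if $G\not\geq 1$ there is some $u\ll 1$ with $u\not\leq G$; as the family $(G_N)_N$ with $G_N:=\bigwedge_{n,m\geq N}g_{n,m}$ is increasing with supremum $G$, this forces $u\not\leq G_N$ for every $N$, hence for every $N$ a pair $n,m\geq N$ with $u\not\leq g_{n,m}$. Note $u\neq 0$ (as $0\leq G$) and $g_{n,n}\geq 1\geq u$, so such ``bad'' pairs have $n\neq m$. Passing to the case where infinitely many bad pairs satisfy $m>n$ (the case $n>m$ being symmetric via $b$), I would choose bad pairs with first coordinate tending to infinity and, for each, take $q_k$ minimal with $q_k>p_k$ and $u\not\leq g_{p_k,q_k}$. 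Minimality gives $u\leq g_{p_k,q_k-1}$, so the triangle inequality yields
$$g_{p_k,q_k}\geq g_{p_k,q_k-1}\circ a_{q_k-1}\geq u\circ a_{q_k-1},$$
while the triangle inequality together with condition (3) gives the ``diagonal'' estimate
$$g_{p_k,q_k}\geq a_{p_k}\circ\varphi(g_{p_k,q_k})\circ b_{q_k}.$$

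Setting $L:=\bigvee_K\bigwedge_{k\geq K}g_{p_k,q_k}$, I would push both estimates through the operations $\bigwedge_{k\geq K}$ and then $\bigvee_K$, repeatedly using that $\circ$ preserves suprema (so that suprema of increasing products factor as products of suprema) and that $p_k,q_k\to\infty$ makes the liminfs of $(a_{p_k})$, $(a_{q_k-1})$ and $(b_{q_k})$ equal to $a$ and $b$. The first estimate gives $L\geq u\circ a\geq u\neq 0$, and the second, combined with sequential lower-semicontinuity of $\varphi$, gives $L\geq a\circ\varphi(L)\circ b\geq\varphi(L)\geq L$; hence $\varphi(L)=L$, and since $L\neq 0$, condition (2) forces $L\geq 1$. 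Finally, because $u\ll 1\leq L$ and $L$ is the supremum of the increasing family $\bigl(\bigwedge_{k\geq K}g_{p_k,q_k}\bigr)_K$, the way-below relation provides a $K_0$ with $u\leq\bigwedge_{k\geq K_0}g_{p_k,q_k}\leq g_{p_{K_0},q_{K_0}}$, contradicting that $(p_{K_0},q_{K_0})$ is bad. Thus $G\geq 1$, as required.

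The routine parts are the manipulations of $\bigvee$, $\bigwedge$ and $\circ$; the essential difficulty, and the place where continuity of the lattice is indispensable, is the passage from the ``consecutive'' information ($a,b\geq 1$) to the ``global'' Cauchy condition. A naive estimate of $g_{n,m}$ by a product $a_n\circ\cdots\circ a_{m-1}$ is useless, because the number of factors grows without bound; the minimal-index selection, which pins each bad $g_{p_k,q_k}$ just below the threshold $u$ while letting the error factors $a_{p_k},b_{q_k}$ converge to $a,b\geq 1$, is exactly what replaces Boyd and Wong's subsequence argument in this lattice-theoretic setting.
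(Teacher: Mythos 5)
Your argument is correct and follows essentially the same route as the paper's proof: reduction to the Cauchy condition via Proposition \ref{cauchy-sequences}, the fixpoint argument showing $\bigvee_n\bbC(f^nx,f^{n+1}x)\geq 1$ and $\bigvee_n\bbC(f^{n+1}x,f^nx)\geq 1$, the choice of $u\ll 1$ witnessing failure, the minimal-index selection of bad pairs, the two estimates forcing $L\geq u$ and $L\geq 1$, and the way-below contradiction. The only (harmless) variation is that you pass to a subsequence of bad pairs of a single type $m>n$ and treat the other type by symmetry, whereas the paper keeps both types and splits the infimum using meet-continuity of the continuous lattice.
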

\begin{proof} 
Putting $C_{x,f}:=\bigvee_{N\in\N}\bigwedge_{n\geq N}\bigwedge_{m\geq N}\bbC(f^nx,f^mx)\in Q$, we recall from Proposition \ref{cauchy-sequences} that $\phi_{x,f}\dashv\psi_{x,f}$ if and only if $C_{x,f}\geq 1$. We shall show that $C_{x,f}\not\geq1$ leads to a contradiction. 

(i) Picking an $x\in\bbC_0$ such that $\bbC(x,fx)\neq0\neq\bbC(fx,x)$, we put $c_n:=\bbC(f^nx,f^{n+1}x)\in\Q$ for all $n\in\N$. By assumption, $0<c_0\leq1$ and the conditions on $\varphi$ imply that $c_0\leq\varphi(c_0)\leq c_1$. Repeating the argument we find that $c_n\leq\varphi(c_n)\leq c_{n+1}$, so the sequence is increasing and strictly above $0$. Therefore we can compute, using the conditions on $\varphi$, that:
\begin{align*}
\bigvee_{N\in\N}c_N 
&=\bigvee_{N\in\N}c_{N+1} \\
&=\bigvee_{N\in\N}\bigwedge_{n\geq N} c_{n+1}\\
&\geq\bigvee_{N\in\N}\bigwedge_{n\geq N}\varphi(c_n)\\
&\geq\varphi(\bigvee_{N\in\N}\bigwedge_{n\geq N}c_n)\\
&=\varphi(\bigvee_{N\in\N}c_N)\\
&\geq\bigvee_{N\in\N}c_N
\end{align*}
We thus find a fixpoint of $\varphi$ which is not $0$, so it must satisfy $1\leq\bigvee_{N\in\N}c_N$.

(ii) Similarly, the sequence $(a_n:=\bbC(f^{n+1}x,f^nx))_{n\in\N}$ must also satisfy $1\leq\bigvee_{\in\N}a_n$. 

(iii) Next, suppose that $1\not\leq C_{f,x}$; by continuity of the underlying complete lattice of $Q$, this means that there exists an $\epsilon\ll 1$ such that $\epsilon\not\leq C_{f,x}$ (and so in particular $\epsilon\neq0$). Using the definition of $C_{f,x}$ as a sup-inf, we may infer:
\begin{align*}
\epsilon\not\leq\bigvee_{k\in\N}\left(\bigwedge_{n\geq k}\bigwedge_{m\geq k}\bbC(f^nx,f^mx) \right)
& \Longrightarrow \forall k\in\N: \epsilon\not\leq\bigwedge_{n\geq k}\bigwedge_{m\geq k}\bbC(f^nx,f^mx)\\
& \Longrightarrow\forall k\in\N,\exists n_k,m_k\geq k: \epsilon\not\leq\bbC(f^{n_k}x,f^{m_k}x)
\end{align*}
In the last line above, it cannot be the case that $m_k=n_k$, because otherwise $\bbC(f^{n_k}x,f^{n_k}x)\geq 1$ (by the ``identity'' axiom for the $Q$-category $\bbC$), which would then also be above $\epsilon\ll 1$. So suppose that $n_k<m_k$, then we can replace $m_k$ by
$$m'_k:=\min\{m>n_k\mid\epsilon\not\leq\bbC(f^{n_k}x,f^mx)\}$$
and so we still have $\epsilon\not\leq\bbC(f^{n_k}x,f^{m'_k}x)$, but now we know also that $\epsilon\leq\bbC(f^{n_k}x,f^{m_k-1}x)$. Similarly, if $n_k>m_k$ then we may replace $n_k$ by
$$n'_k:=\min\{n> m_k\in\N\mid\epsilon\not\leq\bbC(f^nx,f^{m_k}x)\}$$
and we still have $\epsilon\not\leq\bbC(f^{n'_k}x,f^{m_k}x)$, but now we know also that $\epsilon\leq\bbC(f^{n'_k-1}x,f^{m_k}x)$. That is to say, we can always pick $n_k,m_k\geq k$ to ensure that
$$\epsilon\not\leq\bbC(f^{n_k}x,f^{m_k}x)\mbox{ and }
\left\{\begin{array}{rl}
\mbox{ either }&\bbC(f^{n_k}x,f^{m_k-1}x)\geq\epsilon\quad(A)\\
\mbox{ or }&\bbC(f^{n_k-1}x,f^{m_k}x)\geq\epsilon\quad(B)\end{array}\right.$$
Now denote, for each such pick of $n_k,m_k\geq k\in\N$,
$$d_k:=\bbC(f^{n_k}x,f^{m_k}x);$$
and let us insist that $\epsilon\not\leq d_k$ for all $k\in\N$. In case condition (A) holds for $d_k$, then in particular $m_k>n_k$ so $m_k\geq 1$, and we can use the ``composition'' axiom in $\bbC$ to get
\begin{align*}
\epsilon\circ c_{m_k-1}
& \leq \bbC(f^{n_k}x,f^{m_k-1}x)\circ \bbC(f^{m_k-1}x,f^{m_k}x) \\
& \leq \bbC(f^{n_k}x,f^{m_k}x) \\
& = d_k
\end{align*}
In case condition (B) holds for $d_k$ we can similarly prove that
$$ a_{n_k-1}\circ\epsilon\leq d_k.$$
Hence, using in $(*)$ that a continuous lattice is always meet-continuous, and that both sequences
$$
\left(\bigwedge\{d_k\mid k\geq N\mbox{ and }(A)\mbox{ holds}\}\right)_{N\in\N}
\mbox{ and }
\left(\bigwedge\{d_k\mid k\geq N\mbox{ and }(B)\mbox{ holds}\}\right)_{N\in\N}$$
are increasing, we may compute that 
\begin{align*}
\bigvee_{N\in\N}\bigwedge_{k\geq N}d_k
& =\bigvee_{N\in\N_0}\bigwedge_{k\geq N}d_k \\
& =\bigvee_{N\in\N_0}\left(\bigwedge\{d_k\mid k\geq N\mbox{ and }(A)\mbox{ holds}\}\wedge\bigwedge\{d_k\mid k\geq N\mbox{ and }(B)\mbox{ holds}\}\right)\\
  &\stackrel{(*)}{=}\left(\bigvee_{N\in\N_0}\bigwedge\{d_k\mid k\geq N\mbox{ and }(A)\mbox{ holds}\}\right)\\
&\hspace{5cm}\wedge\left(\bigvee_{N\in\N_0}\bigwedge\{d_k\mid k\geq N\mbox{ and }(B)\mbox{ holds}\}\right)\\
  & \geq\left(\bigvee_{N\in\N_0}\bigwedge\{\epsilon\circ c_{m_k-1}\mid k\geq N\mbox{ and } (A)\mbox{ holds}\}\right)\\
&\hspace{5cm}\wedge\left(\bigvee_{N\in\N_0}\bigwedge\{a_{n_k-1}\circ\epsilon\mid k\geq N\mbox{ and }(B)\mbox{ holds}\}\right)\\
  & \geq\left(\bigvee_{N\in\N}\bigwedge_{m\geq N}\epsilon\circ c_m\right)\wedge\left(\bigvee_{N\in\N}\bigwedge_{m\geq N}a_n\circ\epsilon\right)\\
   &\geq\left(\epsilon\circ(\bigvee_{N\in\N}\bigwedge_{m\geq N}c_m)\right)\wedge\left((\bigvee_{N\in\N}\bigwedge_{m\geq N}a_n)\circ\epsilon\right)\\
& =\left(\epsilon\circ(\bigvee_{N\in\N}c_N)\right)\wedge\left((\bigvee_{N\in\N}a_N)\circ\epsilon\right) \\
&=(\epsilon\circ 1)\wedge(1\circ\epsilon) \\
&=\epsilon
\end{align*}
So, even though $\epsilon\not\leq d_k$ (for all $k\in\N$), we do have that $0\neq\epsilon\leq\bigvee_{N\in\N}\bigwedge_{k\geq N}d_k$. 

(iv) Using the ``composition'' axiom in $\bbC$, we have for every $k\geq N\in\N$ (recall that $n_k,m_k\geq k$ too) that
$$d_k\geq c_{n_k}\circ\bbC(f^{n_k+1}x,f^{m_k+1}x)\circ a_{m_k}\geq c_{n_k}\circ\varphi(d_k)\circ a_{m_k}\geq c_N\circ\varphi(d_k)\circ a_N$$
and so we may compute that
\begin{align*}
\bigvee_{N\in\N}\bigwedge_{k\geq N}d_k
&\geq \bigvee_{N\in\N}\bigwedge_{k\geq N}(c_N\circ\varphi(d_k)\circ a_N)\\
&\geq \bigvee_{N\in\N}\left(c_N\circ(\bigwedge_{k\geq N}\varphi(d_k))\circ a_N\right)\\
&\stackrel{(*)}{=}\left(\bigvee_{N\in\N}c_N\right)\circ\left(\bigvee_{N\in\N}\bigwedge_{n\geq N}\varphi(d_k)\right)\circ\left(\bigvee_{N\in\N}a_N\right)\\
&=1\circ\left(\bigvee_{N\in\N}\bigwedge_{k\geq N}\varphi(d_k)\right)\circ 1\\
&\geq \varphi(\bigvee_{N\in\N}\bigwedge_{n\geq N}d_k)\\
&\geq \bigvee_{N\in\N}\bigwedge_{n\geq N}d_k
\end{align*}
where in $(*)$ we use that the involved sequences are increasing\footnote{For two sequences $(a_n)_{n\in\N}$ and $(b_n)_{n\in\N}$ of elements in $Q$, distributivity of product over suprema  in $Q$ assures that $(\bigvee_na_n)\circ(\bigvee_mb_m)=\bigvee_{n,m}(a_n\circ b_m)$. However, when both sequences are increasing, i.e.\ $n\leq n'$ implies $a_n\leq a_{n'}$ and $b_n\leq b_{n'}$, then this is further equal to $\bigvee_n(a_n\circ b_n)$. The argument obviously extends to three increasing sequences.}. This means that $\bigvee_{N\in\N}\bigwedge_{k\geq N}d_k$ is a fixpoint of $\varphi$ which -- as we showed earlier -- is not $0$, so we must have $1\leq\bigvee_{N\in\N}\bigwedge_{k\geq N}d_k$.

(v) Since $\epsilon\ll 1\leq\bigvee_{N\in\N}\bigwedge_{k\geq N}d_k$, and the latter supremum is directed, by continuity of $Q$ there must exist an $N_0\in\N$ such that $\epsilon\leq\bigwedge_{k\geq N_0}d_k$. Yet, we established earlier that $\epsilon\not\leq d_k$ for all $k\in\N$. This is the announced contradiction.
\end{proof}
For simplicity's sake, we asked in the above Proposition that the control function $\varphi\:Q\to Q$ of the contraction $f\:\bbC\to\bbC$ satisfies $\varphi(\bigvee_{N\in\N}\bigwedge_{n\geq N}t_n)\leq \bigvee_{N\in\N}\bigwedge_{n\geq N}\varphi(t_n)$ \emph{for any sequence} $(t_n)_{n\in\N}$ in $\Q$. Yet, in the proof, we really only need such inequalities for sequences whose elements are ``homs'' in $\bbC$ (i.e.\ elements in the image of $\bbC\:\bbC_0\times\bbC_0\to Q$). Similarly, in Definition \ref{def-contr} we asked $\varphi$ to be strictly increasing on all $t\in Q$, except possibly in $t=0$ or $t\geq 1$. But again, in the proof above we really only need this strictness in those particular elements of $Q$ which are suprema, or suprema of infima, of ``homs'' in $\bbC$. Finally, also the condition that the quantale $Q$ has a continuous underlying lattice can be weakened: in the proof we really only use that $1=\bigvee\{u\in Q\mid u\ll 1\}$, so ``continuity at $1\in Q$'' suffices.

\subsection{Fixpoint for a contraction on a Cauchy-complete \texorpdfstring{$Q$}{Q}-category}

In the above Subsection we discovered sufficient conditions for a $\varphi$-contraction $f\:\bbC_0\to\bbC_0$ to determine adjoint distributors. If the $\Q$-category $\bbC$ is Cauchy-complete, this adjoint pair is represented by an object of $\bbC$. We will now show that this representing object is a fixpoint for the contraction.
\begin{proposition}\label{adj-fixpt}
Let $\Q$ be any quantale and $f\:\bbC\to\bbC$ any $Q$-functor on a Cauchy-complete $\Q$-category. If there exists an $x\in\bbC_0$ such that $\phi_{x,f}\dashv\psi_{x,f}$ then $f$ has a fixpoint.
\end{proposition}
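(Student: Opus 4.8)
The plan is to use Cauchy-completeness to convert the given adjunction $\phi_{x,f}\dashv\psi_{x,f}$ into an actual object $u\in\bbC_0$, and then to read off the two homs $\bbC(fu,u)$ and $\bbC(u,fu)$ directly from the data representing that object, showing both are $\geq1$.

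First I would invoke Cauchy-completeness. Since $\phi_{x,f}\dashv\psi_{x,f}$, the presheaf $\phi_{x,f}\:\1\dist\bbC$ is a left adjoint, hence representable: there is some $u\in\bbC_0$ with $\phi_{x,f}=(\Delta_u)_*$. Because right adjoints in the $2$-category $\Dist(\Q)$ are unique, the accompanying $\psi_{x,f}$ must then coincide with $(\Delta_u)^*$. Unwinding the definitions of $(\Delta_u)_*$ and $(\Delta_u)^*$, this yields, for every $y\in\bbC_0$, the two identities
\[
\bbC(y,u)=\phi_{x,f}(y)=\bigvee_{N\in\N}\bigwedge_{n\geq N}\bbC(y,f^nx),\qquad
\bbC(u,y)=\psi_{x,f}(y)=\bigvee_{N\in\N}\bigwedge_{n\geq N}\bbC(f^nx,y).
\]

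Next, to conclude $fu\cong u$ I must establish $1\leq\bbC(fu,u)$ and $1\leq\bbC(u,fu)$. I would evaluate the first identity at $y=fu$ and the second at $y=fu$, obtaining $\bbC(fu,u)=\bigvee_N\bigwedge_{n\geq N}\bbC(fu,f^nx)$ and $\bbC(u,fu)=\bigvee_N\bigwedge_{n\geq N}\bbC(f^nx,fu)$. The crucial observation is that functoriality of $f$ lets me strip one application of $f$ off each argument, i.e.\ $\bbC(fu,f^nx)\geq\bbC(u,f^{n-1}x)$ and $\bbC(f^nx,fu)\geq\bbC(f^{n-1}x,u)$ for $n\geq1$. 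Since the double-indexed expression $\bigvee_N\bigwedge_{n\geq N}$ is a ``tail'' that is invariant under the re-indexing $n\mapsto n-1$, both lower bounds collapse back to $\bigvee_N\bigwedge_{n\geq N}\bbC(u,f^nx)=\bbC(u,u)$ and $\bigvee_N\bigwedge_{n\geq N}\bbC(f^nx,u)=\bbC(u,u)$ respectively, using the two identities once more at $y=u$. As $\bbC(u,u)\geq1$ by the identity axiom for $\bbC$, both desired inequalities follow, so $u$ is a fixpoint of $f$.

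The argument is short and uses no hypothesis on $\Q$ beyond its being a quantale; the continuity and sequential lower-semicontinuity conditions were already spent in establishing the adjunction in Proposition \ref{contr-adj}. So I do not expect a genuine obstacle here, only two points that need care: first, correctly extracting \emph{both} $\bbC(-,u)$ and $\bbC(u,-)$ from the single representing object $u$, which rests on the uniqueness of adjoints in $\Dist(\Q)$; and second, the boundary index $n=0$ in the shift $n\mapsto n-1$, which is harmless because restricting the outer supremum to $N\geq1$ changes nothing (the inner infima form an increasing family in $N$), so one may legitimately pass from $\bigvee_{N}\bigwedge_{n\geq N}$ to $\bigvee_{N\geq1}\bigwedge_{n\geq N}$ before re-indexing.
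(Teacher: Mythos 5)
Your proof is correct and takes essentially the same route as the paper's: Cauchy-completeness produces the representing object $u$, and functoriality of $f$ together with the tail-invariance of $\bigvee_{N}\bigwedge_{n\geq N}$ under the shift $n\mapsto n-1$ yields $\bbC(fu,u)\geq\bbC(u,u)\geq 1$ and symmetrically $\bbC(u,fu)\geq 1$. Your explicit appeal to uniqueness of right adjoints in $\Dist(\Q)$ to identify $\psi_{x,f}$ with $\bbC(u,-)$ is a detail the paper leaves implicit, but the argument is otherwise identical.
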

\begin{proof}
By Cauchy-completeness of $\bbC$, the presheaves $\phi_{x,f}$ and $\psi_{x,f}$ are representable; so suppose that $\phi_{x,f}=\bbC(-,u)$ and $\psi_{x,f}=\bbC(u,-)$ for some $u\in\bbC_0$.
Now we can compute that
\begin{align*}
\bbC(fu,u)
&=\phi_{x,f}(fu)\\
&=\bigvee_{N\in\N}\bigwedge_{n\geq N}\bbC(fu,f^nx)\\
&=\bigvee_{N\in\N_0}\bigwedge_{n\geq N}\bbC(fu,f^nx)\\
&\stackrel{(*)}{\geq}\bigvee_{N\in\N}\bigwedge_{n\geq N}\bbC(u,f^nx)\\
&=\phi_{x,f}(u)\\
&=\bbC(u,u)\\
&\geq1
\end{align*}
using the ``functoriality'' axiom for $f$ in $(*)$. Similarly one computes that $\bbC(u,fu)\geq1$. Therefore we have both $u\geq fu$ and $fu\geq u$ in (the underlying order of) $\bbC$, which means that $u\cong fu$, as wanted.
\end{proof}
Putting Propositions \ref{contr-adj} and \ref{adj-fixpt} together, we arrive at:
\begin{theorem}[Fixpoint theorem]\label{fixpt}
Let $\Q$ be quantale whose underlying lattice is continuous, and let $f\:\bbC\to\bbC$ a $\varphi$-contraction on a Cauchy-complete $\Q$-category, for which the control function $\varphi\:Q\to Q$ is sequentially lower-semicontinuous. If there exists an $x\in\bbC_0$ such that $\bbC(x,fx)\neq0\neq\bbC(fx,x)$ then $f$ has a fixpoint, namely the object representing the adjunction $\phi_{x,f}\dashv\psi_{x,f}$.
\end{theorem}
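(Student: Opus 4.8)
The plan is to assemble the statement directly from the two preceding propositions, since all of the substantive work has already been carried out there; the hypotheses of the theorem have been arranged precisely so as to supply the inputs of Proposition~\ref{contr-adj} and then of Proposition~\ref{adj-fixpt}.

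First I would invoke Proposition~\ref{contr-adj}. Its hypotheses are exactly those now at hand: the underlying lattice of $\Q$ is continuous, the map $f$ is a $\varphi$-contraction whose control function $\varphi$ is sequentially lower-semicontinuous, and the chosen object $x\in\bbC_0$ satisfies $\bbC(x,fx)\neq 0\neq\bbC(fx,x)$. The conclusion I extract is the adjunction $\phi_{x,f}\dashv\psi_{x,f}$ between the two presheaves determined by the Picard iteration $(f^nx)_{n\in\N}$.

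Next I would feed this adjunction into Proposition~\ref{adj-fixpt}. Before doing so I would record the one small compatibility check: Proposition~\ref{adj-fixpt} is stated for an arbitrary $\Q$-functor, and, as observed just after Definition~\ref{def-contr}, every $\varphi$-contraction is in particular a $\Q$-functor, so $f$ qualifies. With $\bbC$ assumed Cauchy-complete and the adjunction $\phi_{x,f}\dashv\psi_{x,f}$ in hand, the proposition produces a representing object $u\in\bbC_0$ of this adjoint pair (which exists by Cauchy-completeness) and certifies that $fu\cong u$, i.e.\ that $u$ is a fixpoint of $f$. This is exactly the asserted conclusion, and it simultaneously identifies the fixpoint as the object representing $\phi_{x,f}\dashv\psi_{x,f}$.

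I do not expect a genuine obstacle at this level: the real difficulty is entirely internal to Proposition~\ref{contr-adj}, where continuity at $1\in\Q$, meet-continuity, and the sequential lower-semicontinuity of $\varphi$ conspire to force the Cauchy condition $C_{x,f}\geq 1$. Here the only care required is bookkeeping---confirming that each hypothesis of the theorem lands in the correct slot of the two propositions, and that the ``a contraction is a functor'' remark licenses the second application.
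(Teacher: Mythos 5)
Your proposal is correct and matches the paper exactly: the paper derives Theorem~\ref{fixpt} by simply ``putting Propositions~\ref{contr-adj} and~\ref{adj-fixpt} together,'' which is precisely your two-step assembly, including the observation that a $\varphi$-contraction is automatically a $\Q$-functor. Nothing is missing.
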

In the above Theorem, the obtained fixpoint depends on the element $x\in\bbC$ chosen such that $\bbC(x,fx)\neq0\neq\bbC(fx,x)$. However, let us recall that Propositions \ref{Uniqueness-Fix} and \ref{Generalized-Uniqueness-Fix} provide mild conditions on $\bbC$ to make the fixpoint of a contraction unique. Let us also repeat that the sufficient conditions can be weakened somewhat, as explained in the comment after Proposition \ref{contr-adj}.

\section{Examples and counterexamples}\label{examples}

The examples in this section show how Theorem \ref{fixpt} generalizes known fixpoint theorems from the literature, and provides new ones too. Also, we mention a counterexample to show that the conditions cannot be weakened unless supplementary conditions are considered.

\subsection{Orders}

The two-element boolean algebra being a continuous lattice, the quantale $\Q=(\{0,1\},\vee,\wedge,1)$ satisfies the condition in Theorem \ref{fixpt}, so this Theorem can potentially say something about ordered sets. Note that the functions
$$\varphi_1\:\{0,1\}\to\{0,1\}\:0\mapsto 0,1\mapsto 1\quad\mbox{ and }\quad\varphi_2\:\{0,1\}\to\{0,1\}\:0\mapsto 1,1\mapsto 1$$
are the only possible control functions (according to Definition \ref{def-contr}). A map $f\:(P,\leq)\to(P,\leq)$ is a $\varphi_1$-contraction if and only if $f$ is monotone; and it is a $\varphi_2$-contraction if and only if $f$ is essentially constant ($fx\cong fy$ for all $x,y\in P$). Any non-empty ordered set is Cauchy-complete as a $\Q$-enriched category. It is part of the hypotheses in Theorem \ref{fixpt} that there exists an $x\in P$ such that $x\leq fx$ and $fx\leq x$; in other words, \emph{by hypothesis} there exists a fixpoint $fx\cong x$. Of course this makes the conclusion of the Theorem (namely, the existence of a fixpoint) trivial! Moreover, the fixpoint that is constructed in the proof (as an object representing a left adjoint presheaf) is in this particular case precisely isomorphic to the fixpoint given as hypothesis. So, for the two-element Boolean algebra, Theorem \ref{fixpt} does not give any result; the Theorem can thus only be meaningful for ``richer'' quantales. (We hasten to add that there exist of course very important fixpoint theorems for ordered sets; but these usually require more stringent completeness conditions on the ordered set and/or more stringent continuity conditions on the map. See e.g.\ \cite{Gierz}.)

\subsection{Metric spaces}

Lawvere's quantale $\Q=([0,\infty],\bigwedge,+,0)$ is linear, and therefore continuous\footnote{Any complete linear lattice $L$ is completely distributive and (therefore) also continuous. 
In fact, we have $a\ll b$ if and only if either $a=0$, or $a<b$, or ($a=b$ and $b\neq\bigvee\{x\in L\mid x<b\}$), see \cite{Gierz}.}. It is also an integral quantale: the unit $0$ for the monoid structure is the top element of the lattice (note again that the order on $[0,\infty]$ is the reverse of the natural order!). This makes the notion of contraction in Definition \ref{def-contr} slightly simpler, so by application of Theorem \ref{fixpt} we can produce the following result:
\begin{corollary}\label{cor1}
Let $\varphi\:[0,\infty]\to[0,\infty]$ be an upper-semicontinuous function so that $\varphi(t)<t$ for any $t\not\in\{0,\infty\}$ and $\varphi(0)=0$. Let $f\:X\to X$ be a map on a Cauchy-complete generalized metric space $(X,d)$ such that $d(fx,fy)\leq\varphi(d(x,y))$ for all $x,y\in X$. If there is an $x\in X$ such that $d(x,fx)\neq \infty\neq d(fx,x)$ then the sequence $(f^nx)_{n\in\N}$ converges to a fixpoint of $f$.
\end{corollary}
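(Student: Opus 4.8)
The plan is to obtain Corollary \ref{cor1} as a direct specialization of Theorem \ref{fixpt} to Lawvere's quantale $\Q=([0,\infty],\bigwedge,+,0)$, the only real work being to transport every hypothesis and the conclusion across the order-reversal that defines this quantale. First I would fix the dictionary: the quantale order $\leq$ is the \emph{reverse} of the natural order on $[0,\infty]$, so the quantale bottom $0$ is $\infty$, the quantale unit $1$ is the natural $0$, composition $\circ$ is $+$, a quantale join $\bigvee$ is a natural infimum, and a nested quantale expression $\bigvee_N\bigwedge_{n\geq N}$ becomes a natural $\limsup$. Continuity of the underlying lattice is immediate since $[0,\infty]$ is linear, as recorded in the footnote.

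Next I would check that the conditions of Definition \ref{def-contr} translate exactly into the stated hypotheses on $\varphi$ and $f$. Condition (1), $\varphi(t)\geq t$ in the quantale, reads $\varphi(t)\leq t$ in the natural order, which is guaranteed by $\varphi(t)<t$ for $t\notin\{0,\infty\}$ together with $\varphi(0)=0$ (and $\varphi(\infty)\leq\infty$ trivially); condition (2), forbidding fixpoints of $\varphi$ except at the quantale's $0$ and at $t\geq 1$, becomes ``no fixpoints except at $\infty$ and $0$'', which the strict inequality on $t\notin\{0,\infty\}$ secures; and condition (3), $\bbC(fx,fy)\geq\varphi(\bbC(x,y))$, is precisely $d(fx,fy)\leq\varphi(d(x,y))$. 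The ``starting point'' hypothesis $\bbC(x,fx)\neq 0\neq\bbC(fx,x)$ becomes $d(x,fx)\neq\infty\neq d(fx,x)$, exactly as in the statement.

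The one point that requires genuine care is matching the \emph{sequentially lower-semicontinuous} hypothesis of Theorem \ref{fixpt} with \emph{upper-semicontinuity} of the real function $\varphi$. After reversal, Definition \ref{def-phi} asks that $\varphi(\limsup_n t_n)\geq\limsup_n\varphi(t_n)$ in the natural order. This is \emph{not} implied by upper-semicontinuity for arbitrary sequences (a continuous, hence usc, $\varphi$ can already fail it on a two-valued oscillating sequence), so I would avoid claiming a blanket equivalence. Instead I would invoke the observation made right after Proposition \ref{contr-adj}: the inequality is only ever applied to the monotone sequence of consecutive distances $c_n=d(f^nx,f^{n+1}x)$ and to the sequence $d_k=d(f^{n_k}x,f^{m_k}x)$, whose construction (together with step (iii) of that proof) pins $\limsup_k d_k=\epsilon$ and hence, since each $d_k>\epsilon$, forces $d_k\to\epsilon$. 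For a convergent sequence $t_n\to t$, upper-semicontinuity gives $\limsup_n\varphi(t_n)\leq\varphi(t)=\varphi(\lim_n t_n)$, which is exactly the required inequality. This reduction to convergent sequences is the crux, and the step I expect to be the main obstacle to phrase cleanly.

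With all hypotheses verified, Theorem \ref{fixpt} produces a fixpoint, namely the object $u$ representing the adjunction $\phi_{x,f}\dashv\psi_{x,f}$. To upgrade this to the assertion that $(f^nx)_{n\in\N}$ \emph{converges} to $u$, I would appeal to Example \ref{metric}: by Proposition \ref{cauchy-sequences} the adjunction $\phi_{x,f}\dashv\psi_{x,f}$ says precisely that $(f^nx)_n$ is a Cauchy sequence in the categorical sense, hence a metric Cauchy sequence; by Lawvere's characterization recalled in Example \ref{metric}, Cauchy-completeness of $(X,d)$ is exactly convergence of all such sequences, and the representing object $u$ is their metric limit. Thus $(f^nx)_n\to u$ with $fu\cong u$, as claimed.
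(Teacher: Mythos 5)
Your proposal is correct and follows essentially the paper's own route: Corollary \ref{cor1} is obtained by specializing Theorem \ref{fixpt} to Lawvere's quantale and translating every hypothesis across the order reversal, with the convergence statement recovered from Lawvere's identification of Cauchy-completeness with convergence of Cauchy sequences. The one subtlety you rightly isolate --- that upper-semicontinuity of $\varphi$ does not give the sequential lower-semicontinuity of Definition \ref{def-phi} for arbitrary sequences, but does suffice for the monotone sequence $(c_n)$ and the convergent sequence $(d_k)$ actually used --- is precisely the point the paper only gestures at in the remark following Proposition \ref{contr-adj}, and you resolve it the intended way.
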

If $(X,d)$ is a \emph{finitary} generalized metric space, then any $x\in X$ will produce a convergent sequence $(f^nx)_{n\in\N}$; and Proposition \ref{Uniqueness-Fix} implies that all such sequences $(f^nx)_{n\in\N}$ converge to an essentially unique fixpoint of $f$ (unique if the space is also separated).
If $(X,d)$ is a \emph{symmetric} generalized metric space, then any $x\in X$ such that $d(x,fx)\neq\infty$ will produce a convergent sequence $(f^nx)_{n\in\N}$; and Proposition \ref{Generalized-Uniqueness-Fix} implies that any two fixpoints of $f$ are either isomorphic (equal if the space is also separated) or at distance $\infty$ from each other (i.e.\ the space $(X,d)$ decomposes as a categorical sum of two non-empty spaces, and the fixpoints are in different summands). Furthermore, for the reasons explained right after Proposition \ref{contr-adj}, in the above result it is actually enough to require that $\varphi$ is (defined on and) upper-semicontinuous on the closure of $\{d(x,y)\mid x,y\in X\}$. This is how the above result is formulated by Boyd and Wong \cite[Theorem 1]{boywan69} in the particular case that $(X,d)$ is a finitary, symmetric, separated metric space (see also \cite{LechB}):
\begin{example}\label{BWExample}
For $(X,d)$ a Cauchy-complete metric space, let $\varphi\:\overline{\{(d(x,y)\mid x,y\in X\}}\!\to\![0,\infty]$ be an upper-semicontinuous function that maps $0$ to $0$ and so that $\varphi(t)<t$ for any $t\not\in\{0,\infty\}$. Then any map $f\:X\to X$ satisfying $d(fx,fy)\leq\varphi(d(x,y))$ for all $x,y\in X$ has a unique fixpoint, and for any $x\in X$ the sequence $(f^nx)_{n\in\N}$ converges to that fixpoint.
\end{example}
On the other hand, the control function defined by $\varphi(t)=k\cdot t$ for $0<k<1$ certainly satisfies the conditions in Corollary \ref{cor1}, so we find the following particular case:
\begin{example}
Let $f\:X\to X$ be a map on a Cauchy-complete generalized metric space $(X,d)$ for which there exists a $0<k<1$ such that $d(fx,fy)\leq k\cdot d(x,y)$ for all $x,y\in X$. If there is an $x\in X$ such that $d(x,fx)\neq \infty\neq d(fx,x)$ then the sequence $(f^nx)_{n\in\N}$ converges to a fixpoint of $f$.
\end{example}
Obviously, if $(X,d)$ is finitary, symmetric and separated (and hence an ordinary metric space), we find here the well-known Banach Fixpoint Theorem. Finally, we mention that Ackerman \cite{Ackerman16} has produced an example of a \emph{non-expansive} contraction -- whose control function merely satisfies $\varphi(t)\leq t$ instead of $\varphi(t)<t$ for $t\not\in\{0,\infty\}$ -- on a Cauchy-complete metric space which does not have a fixpoint. This shows that this condition on the control map cannot be weakened without strenghtening some other conditions in Corollary \ref{cor1}.

\subsection{Fuzzy orders}

The quantale $\Q=([0,1],\bigvee,*,1)$, where $*$ is a left-continuous t-norm, is linear (thus continuous, see a previous footnote) and integral. Hence, by application of Theorem \ref{fixpt} we find:
\begin{corollary}
Let $\varphi\:[0,1]\to[0,1]$ be a lower-semicontinuous function satisfying $\varphi(t)>t$ for all $0<t<1$, and $\varphi(1)=1$. Suppose that $(P,\lb \cdot\leq\cdot\rb)$ is a complete fuzzy preorder, and that $f\:P\to P$ is a function such that $\lb fx\leq fy\rb\geq\varphi(\lb x\leq y\rb)$. If there exist $x\in P$ such that $\lb x\leq fx\rb\neq0\neq\lb fx\leq x\rb$, then the sequence $(f^n(x))_{n\in\N}$ converges to a fixpoint of $f$.
\end{corollary}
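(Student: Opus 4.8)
The plan is to read this Corollary as a direct application of Theorem \ref{fixpt} to the quantale $\Q=([0,1],\bigvee,*,1)$ of Example \ref{fuzzyorder}, so that the whole task reduces to checking, one by one, that the hypotheses of that Theorem hold under the dictionary $\bbC(x,y)=\lb x\leq y\rb$. First I would dispose of the two structural hypotheses. The underlying lattice $([0,1],\bigvee)$ is a complete chain, hence completely distributive and \emph{a fortiori} continuous (as recorded in the metric-space footnote), so the continuity requirement on $\Q$ is met; and a \emph{complete} fuzzy preorder is, by the characterization quoted in Example \ref{fuzzyorder} (Hofmann--Reis' Theorem 4.19 together with Coppola's Definition 4.1), precisely a Cauchy-complete $\Q$-category, which is what Theorem \ref{fixpt} asks for.

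Next I would verify that $f$ is a $\varphi$-contraction in the sense of Definition \ref{def-contr}. Here I use crucially that the $t$-norm quantale is \emph{integral}, i.e.\ the monoid unit $1$ is the top of the chain, so that the clause ``$t=0$ or $1\leq t$'' of condition (2) in Definition \ref{def-contr} collapses to ``$t\in\{0,1\}$''. Condition (1), $\varphi(t)\geq t$, then follows from the hypotheses $\varphi(t)>t$ on $(0,1)$ and $\varphi(1)=1$ together with the automatic $\varphi(0)\geq 0$; condition (2) holds because the only possible fixpoints of $\varphi$ are $0$ and $1$; and condition (3), $\bbC(fx,fy)\geq\varphi(\bbC(x,y))$, is literally the stated contraction inequality $\lb fx\leq fy\rb\geq\varphi(\lb x\leq y\rb)$. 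The remaining hypothesis $\bbC(x,fx)\neq 0\neq\bbC(fx,x)$ is exactly $\lb x\leq fx\rb\neq 0\neq\lb fx\leq x\rb$.

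The one genuinely delicate point is the lower-semicontinuity clause, and this is the step I expect to require the most care. Theorem \ref{fixpt} demands the sup--inf inequality of Definition \ref{def-phi}, so the hypothesis has to be read as that \emph{sequential} lower-semicontinuity, which on the chain $[0,1]$ is the pertinent order-theoretic notion matching Theorem \ref{fixpt} on the nose. I would stress that, as noted in the remark following Proposition \ref{contr-adj}, it is in any case enough to have this inequality for sequences valued in the homs $\{\lb x\leq y\rb\mid x,y\in P\}$, and that the sequences actually occurring in the proof of Proposition \ref{contr-adj} are monotone, for which the inequality is most transparent; the caution is warranted precisely because for non-monotone $\varphi$ the topological and the Definition \ref{def-phi} versions of lower-semicontinuity need not coincide.

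With all hypotheses in place, Theorem \ref{fixpt} yields a fixpoint $u$, namely the object representing the adjunction $\phi_{x,f}\dashv\psi_{x,f}$. It then remains only to recognize this $u$ as the asserted \emph{limit} of $(f^n(x))_{n\in\N}$. By Proposition \ref{cauchy-sequences}, the adjunction $\phi_{x,f}\dashv\psi_{x,f}$ (supplied by Proposition \ref{contr-adj}) is equivalent to $C_{x,f}\geq 1$, i.e.\ to $(f^n(x))_{n\in\N}$ being a Cauchy sequence, and its representing object $u$ is by definition the object to which that Cauchy sequence converges, in the sense recalled in Example \ref{fuzzyorder}. Hence $(f^n(x))_{n\in\N}$ converges to the fixpoint $u$, as claimed.
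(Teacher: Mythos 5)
Your proposal is correct and follows exactly the route the paper intends: the corollary is stated there as a direct application of Theorem \ref{fixpt} to the integral, linear (hence continuous) quantale $([0,1],\bigvee,*,1)$, with no further argument given. Your explicit verification of the hypotheses --- including the observation that integrality collapses condition (2) of Definition \ref{def-contr} to $t\in\{0,1\}$, and the identification of the representing object with the limit of the Picard sequence via Proposition \ref{cauchy-sequences} --- is a faithful, more detailed account of that same application.
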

This can be regarded as a (straightforward) generalization of Corollary \ref{cor1} above, since Lawvere's quantale $([0,\infty],\bigwedge,+,0)$ is isomorphic to the product $t$-norm $([0,1],\bigwedge,\cdot,1)$ by the order-reversing map $[0,\infty]\to[0,1]\:t\mapsto\exp(-t)$.

\subsection{Probabilistic metric spaces}

The integral quantale $(\Delta,\bigvee,*,e)$ of distance distributions (wrt.\ a left-continuous t-norm $*$) is completely distributive\footnote{Indeed, the complete distributivity of the underlying suplattices follows from \cite[Theorem 2.1.17]{GutGarHohKub17}, who show that the tensor product of completely distributive complete lattices is completely distributive.}, hence continuous, so we can apply Theorem \ref{fixpt}.
\begin{corollary}
Let $\varphi\:\Delta\to\Delta$ be a lower-semicontinuous function satisfying $\varphi(u)>u$ for all $0<u<e$, and $\varphi(e)=e$. Suppose that $f:X\to X$ is a function on a Cauchy-complete generalized probabilistic metric space $(X,d,*)$ such that $d(fx,fy,t)\geq\varphi(d(x,y,t)$ for all $t$. If there exists an $x\in X$ such that $d(x,fx,t)\neq0\neq d(fx,x,t)$ then $f$ has a fixpoint.
\end{corollary}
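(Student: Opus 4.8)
The plan is to verify, one by one, the hypotheses of Theorem~\ref{fixpt} for the quantale $\Q=(\Delta,\bigvee,*,e)$ and the probabilistic metric space $(X,d,*)$ regarded as a $\Q$-category (as in Example~\ref{probametric}), and then simply invoke that Theorem. The continuity of the underlying lattice of $\Q$ is already recorded just above the statement (it is completely distributive, hence continuous), and Cauchy-completeness of $(X,d,*)$ is part of the hypotheses, so two of the three structural conditions come for free. Throughout I would identify the hom $\bbC(x,y)$ of the $\Q$-category with the distance distribution $d(x,y,-)\in\Delta$, so that the order on homs is the pointwise order on $\Delta$ and the contraction inequality $d(fx,fy,-)\geq\varphi(d(x,y,-))$ is read as a single inequality in $\Delta$.

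First I would check that $f$ is a $\varphi$-contraction in the sense of Definition~\ref{def-contr}. Condition~(3) is immediate: it is exactly the assumed inequality $\bbC(fx,fy)\geq\varphi(\bbC(x,y))$. For conditions~(1) and~(2) the key observation is that $\Q$ is \emph{integral}, so its unit $e$ is the \emph{top} element of $\Delta$; consequently every $u\in\Delta$ satisfies $0\leq u\leq e$, and the open interval $0<u<e$ exhausts all elements other than the bottom $0$ and the top $e$. On that interval the hypothesis $\varphi(u)>u$ gives both $\varphi(u)\geq u$ (condition~(1)) and the absence of fixpoints (condition~(2)); at $u=e$ we have $\varphi(e)=e\geq e$, and since $e$ is the unit this fixpoint is permitted by condition~(2) (here ``$1\leq u$'' means $e\leq u$, i.e.\ $u=e$); and at the bottom $u=0$ condition~(1) holds automatically while condition~(2) is satisfied regardless of the value of $\varphi(0)$. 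Hence $\varphi$ is a control function for $f$.

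Next I would verify that $\varphi$ is sequentially lower-semicontinuous in the sense of Definition~\ref{def-phi}. By the footnote to that definition, the assumed lower-semicontinuity of $\varphi$ is exactly the sup-inf inequality tested against \emph{all} nets; since every sequence is in particular a net indexed by the directed poset $\N$, the required inequality for sequences follows at once. Finally, the hypothesis that there is an $x\in X$ with $d(x,fx,-)\neq0\neq d(fx,x,-)$ (read as: neither distance distribution is the bottom element of $\Delta$) is precisely the condition $\bbC(x,fx)\neq0\neq\bbC(fx,x)$ required by Theorem~\ref{fixpt}. With all hypotheses in place, Theorem~\ref{fixpt} yields a fixpoint of $f$, namely the object of $(X,d,*)$ representing the adjunction $\phi_{x,f}\dashv\psi_{x,f}$.

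The proof is therefore essentially a dictionary exercise, and I do not expect a genuine obstacle; the only point requiring care is the translation of the strict hypotheses on $\varphi$ into the two sign conditions of Definition~\ref{def-contr}, which rests entirely on integrality forcing $e$ to be the top of $\Delta$ (so that ``$0<u<e$'' really does cover all non-extreme elements, and ``$1\leq u$'' collapses to $u=e$). A secondary subtlety is purely notational: one must read $\varphi$ as acting on the whole distance distribution $d(x,y,-)\in\Delta$ rather than on a scalar $d(x,y,t)\in[0,1]$, and interpret the displayed inequalities pointwise in $t$.
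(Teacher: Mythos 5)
Your proposal is correct and follows exactly the route the paper intends: the paper's (implicit) proof is precisely the observation that $(\Delta,\bigvee,*,e)$ is completely distributive, hence continuous, so that Theorem~\ref{fixpt} applies directly, with the translation of the hypotheses on $\varphi$ into Definition~\ref{def-contr} resting on integrality of the quantale just as you describe. Your verification of the individual hypotheses is more explicit than the paper's, but substantively identical.
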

It follows furthermore from Proposition \ref{Uniqueness-Fix} that, if $d(x,y,\infty)=1$ for all $x,y\in X$ (i.e.\ the space is \emph{finitary}), then the fixed point is unique.

There are indeed examples of control functions $\varphi\:\Delta\to\Delta$ that the above statement asks for, e.g.\
$$\varphi(u)(t):=\left\{
\begin{array}{ll}
\frac{1}{2}(u(t)+1)&\mbox{ if }0<t\leq\infty\\
0&\mbox{ if }t=0
\end{array}\right.$$
Unfortunately though, the ``Banach control function'' which is appropriate in the setting of probabilistic metric spaces\footnote{In \cite{hadzicpap} the contractions with this control function are called \emph{probabilistic q-contractions}.},
$$\varphi(u)(t)=u(Kt)\mbox{ for some }1<K<\infty,$$
does not satisfy $\varphi(u)\neq u$ for all $0\neq u\neq e$ (e.g.\ the ``almost constant'' functions, defined by $u(t)=u_0$ for $0<u_0<1$ and $0<t\leq\infty$, are fixpoints of $\varphi$). One possible solution (hinted at by a result in \cite{Gra}) would be to work with \emph{finitary} probabilistic metric spaces. These can be seen as categories enriched in the subquantale
$$\Delta^+=\{u\in\Delta\mid u(\infty)=1\}\cup\{0\}$$
of $\Delta$. Restricted to $\Delta^+$, the Banach control function does not have fixpoints other than $0$ and $e$: if $u\in\Delta^+\setminus\{0\}$ satisfies $u(t)=u(Kt)$, then for any $0<t_0<\infty$,
$$1=u(\infty)=u(\bigvee_{n\in\N}K^nt_0)=\bigvee_{n\in\N}u(K^nt_0)=u(t_0),$$
so indeed $u=e$. However, we do not know whether $\Delta^+$ is continuous (we conjecture that it is not), so we do not know whether we can apply Theorem \ref{fixpt} without modifications: this will be a topic of futher investigation.

\section{Conclusion and further work}\label{concl}

With our study of fixpoint theorems for quantale-enriched categories, we exemplified that such results depend not only on the strength of the contraction and the completeness of the space, but also on the algebraic properties of the underlying quantale: any fixpoint theorem results from an equilibrum between those three aspects.
 
In future work, we want to investigate how several examples of fixpoint theorems in the literature (see e.g.\ \cite{Coppola,Gra,hadzicpap}) fit  -- or, perhaps, do not fit -- with our quantale-enriched approach. This could lead to variants on our Theorem \ref{fixpt}, where different algebraic properties of $Q$ are combined with different conditions on the control functions of contractions, or with different completeness conditions on the $Q$-categories (see e.g.\ \cite{wagner97}). We also intend to study fixpoint theorems for \emph{quantaloid}-enriched categories. This generalization, far from trivial, has the benefit to include in particular the theory of partial metric spaces \cite{stu13,stu18} and of probabilistic partial metric spaces \cite{HeLiaShen19}, two areas for which only few fixpoint theorems are known \cite{oltval04,panal,romag12}.

\section*{Acknowledgement}
The authors warmly thank U. Höhle for his comments on an earlier version of the manuscript. Arij Benkhadra was supported by the CNRST Excellence Scholarship no.\ 15UM5R2018 (Morocco) and the Université du Littoral-Côte d'Opale (France).

\end{document}